\newcommand{\ignore}[1]{}
\newcommand{\bydef}{\stackrel{\rm def}{=}}
\newcommand{\cP}{{\mathcal P}}
\newcommand{\bD}{{\mathbb D}}
\newcommand{\bC}{{\mathbb C}}
\newtheorem{thm}{Theorem}[section]
\newtheorem{lemma}[thm]{Lemma}
\newtheorem{proposition}[thm]{Proposition}
\theoremstyle{definition}
\newtheorem{example}[thm]{Example}
\numberwithin{equation}{section}
\def\textmatrix#1&#2\\#3&#4\\{\bigl({#1 \atop #3}\ {#2 \atop #4}\bigr)}
\def\dispmatrix#1&#2\\#3&#4\\{\left({#1 \atop #3}\ {#2 \atop #4}\right)}
\numberwithin{equation}{section}
\def\textmatrix#1&#2\\#3&#4\\{\bigl({#1 \atop #3}\ {#2 \atop #4}\bigr)}
\def\dispmatrix#1&#2\\#3&#4\\{\left({#1 \atop #3}\ {#2 \atop #4}\right)}
\begin{document}
	
	\title{Rational Penta-inner functions and the distinguished boundary of the pentablock}
	\author[Jindal]{Abhay Jindal}
	\address[Jindal]{Department of Mathematics, Indian Institute of Science, Bengaluru-560012, India}
	\email{abjayj@iisc.ac.in}
	
	\author[Kumar]{Poornendu Kumar}
	\address[Kumar]{Department of Mathematics, Indian Institute of Science, Bengaluru-560012, India}
	\email{poornenduk@iisc.ac.in}
	
	\thanks{2020 {\em Mathematics Subject Classification}:  32F45, 30J05, 93B36, 93B50\\
		{\em Key words and phrase}: Rational inner functions, symmetrized bidisc, pentablock, distinguished
		boundary.}

	\maketitle
	
	\begin{abstract}
		
		In this note, we give a description of rational maps from the open unit disc $\mathbb{D}$ to the pentablock that map the boundary of $\mathbb{D}$ to the distinguished boundary of the pentablock. We also obtain a new characterization of the distinguished boundary of the pentablock.
	\end{abstract}
	\section{Introduction}
	In 2015, Agler, Lykova and Young introduced a new bounded domain called pentablock in \cite{ALY-JMAA}. The pentablock is a subdomain of $\mathbb{C}^{3}$ denoted by $\cP$ and defined as the image of the domain $\{A \in M_{2}(\mathbb{C}): \|A\| < 1\}$ under the mapping 
	$$ \pi: A = [a_{ij}]\mapsto (a_{21}, \operatorname{tr}(A), \operatorname{det}(A)).$$
	We denote the closure of $\mathcal P$ by $\overline{\mathcal P}$. The set $\cP \subset \mathbb{C}^{3}$ is non-convex, polynomially convex, and star-like about the origin, see \cite{ALY-JMAA}. The pentablock is an inhomogeneous domain, see \cite{LK}. The complex geometry and function theory of the pentablock were further developed in \cite{ALY-JMAA, LK, GS, PZ}.
	
	Attempts to solve particular cases of the $\mu-$synthesis problem have also led to the study of two other domains namely the {\em symmetrized bidisc} 
	$$\mathbb{G} := \{ (\operatorname{tr}(A), \operatorname{det}(A)) : A = [a_{ij}]_{2 \times 2}, \|A\| <1\}\subset\bC^2,$$
	 see \cite{AY} and the {\em tetrablock} 
	 $$\mathbb{E}:= \{ (a_{11},a_{22}, \operatorname{det}(A)) : A = [a_{ij}]_{2\times 2}, \|A\| < 1\} \subset \mathbb{C}^{3},$$
	 see \cite{AWY}. We denote the closure of $\mathbb{G}$ by ${\Gamma}.$
	The set $\mathbb{G}$ and $\mathbb{E}$ are polynomially convex and non-convex domains. The symmetrized bidisc and the tetrablock have attracted a considerable amount of interest in recent years. For a greater exposition on these domains, see \cite{AWY, AYTran, ALY-PLMS, AY, Ball-Sau, Tirtha-Tetrablock, TPS-Adv, DKS, Jaydeb, Hari, Lempert-Tetra}.

		Let $\Omega \subset \mathbb{C}^{d}$ be a bounded polynomially convex domain with closure $\overline{\Omega}.$ Let $A(\Omega)$ be the algebra of continuous scalar functions on $\overline{\Omega}$ that are holomorphic in $\Omega.$ A {\em boundary} for $\Omega$ is a subset $C$ of $\overline{\Omega}$ such that every function in $A(\Omega)$ attains its maximum modulus on $C.$ The {\em distinguished boundary} of $\Omega,$ to be denoted by $b\Omega$ (some authors write $b \overline{\Omega}$), is the smallest closed boundary of $\Omega.$ 
	
	The distinguished boundaries of the symmetrized bidisc and the tetrablock were found in \cite{AY} and \cite{AWY} to be 
	\begin{align*}
	b \Gamma & =  \{(s,p)\in \mathbb{C}^{2}: |s| \leq 2, s = \overline{s} p, |p| =1\}\\
	& = \{ (\operatorname{tr}(U), \operatorname{det}(U)) : U = [u_{ij}]_{2 \times 2}, U \text{ is a unitary}\}
	\end{align*}
	and 
	$$b \mathbb{E} = \{ (u_{11}, u_{22}, \operatorname{det}(U)) : U = [u_{ij}]_{2 \times 2}, U \text{ is a unitary}\},$$
	respectively. A key fact used in the above descriptions of distinguished boundaries is that the set of $2 \times 2$ unitary matrices is the distinguished boundary of the $2 \times 2$ matrix operator-norm unit ball. It was shown in reference \cite{ALY-JMAA} that the sets 
	$$K_{0} = \biggl\{ (a,s,p) \in \mathbb{C}^{3}: (s,p)\in b \Gamma, |a| = \sqrt{1 - \frac{1}{4}|s|^{2}}  \biggr\}$$ 
	and 
	$$K_{1} = \biggl\{ (a,s,p) \in \mathbb{C}^{3}: (s,p)\in b \Gamma, |a| \leq  \sqrt{1 - \frac{1}{4}|s|^{2}}  \biggr\}$$ 
	both are boundaries of the pentablock. It was further shown in reference \cite{ALY-JMAA} that the set $K_{0}$ is the distinguished boundary of the pentablock while
	$$K_{1} = \{(u_{21}, \operatorname{tr}(U), \operatorname{det}(U)) : U = [u_{ij}]_{2 \times 2}, U \text{ is a unitary}\}.$$
	This suggests that, unlike in the cases of the symmetrized bidisc and tetrablock, the distinguished boundary of the pentablock is attuned to a certain special class of unitary matrices rather than whole class of unitary matrices. This note finds exactly that special class that describes $K_{0}$ via the map $\pi.$ This, in turn, leads to a new description of the distinguished boundary of the pentablock.    
	
   Let $\mathbb{T}$ denote the unit circle in the complex plane $\mathbb{C}$. An analytic map $x = (x_{1},\dots,x_{d}):\mathbb{D}\rightarrow\overline{\Omega}$ is called a {\em rational $\Omega-$inner} (some authors call it {\em rational $\overline{\Omega}-$inner}) function if each $x_{i}$ is a rational function with poles outside $\overline{\mathbb{D}}$ and $$(x_{1}(\lambda), \dots, x_{d}(\lambda)) \in b\Omega$$ 
   for all $\lambda \in \mathbb{T}.$ In \cite{Blachke},  W. Blaschke  studied the rational $\mathbb{D}-$inner functions and proved that all rational $\mathbb{D}-$inner functions are of the form
	\begin{align}\label{Blas}
		B(z):=e^{i\theta}\prod_{j=1}^n\frac{z-a_j}{1-\overline{a_j}z}
	\end{align}
	for some $a_1,a_2,...,a_n\in\mathbb{D}$ and $\theta\in[0, 2\pi]$. Functions
	of this form are well-known to be the finite Blaschke product. For a survey of results, see \cite{CGP}. If $\Omega=\mathbb{D}^d$, then it follows from $d=1$ case that all rational $\mathbb
		{D}^d-$inner functions are of the form
	$$(B_1(z), \dots B_d(z))$$
	for some finite Blaschke products $B_1, \dots, B_d$. A description of rational $\Gamma-$inner functions is given by Agler--Lykova--Young, see \cite{ALY-Adv}. Alsalhi--Lykova gave a description of rational ${\mathbb{E}}-$inner functions, see \cite{Omar-Lykova}. In section 3, we give a description of rational $\cP-$inner functions, see Theorem \ref{Penta-inner}.

	Sometime after this paper was finished and uploaded to arXiv, \cite{NZ} appeared on arXiv. There is an overlap of one result of our paper with \cite{NZ}. Theorem \ref{Penta-inner} also appears there. The proofs are different.  Fej\'er-Riesz Theorem is used in \cite{NZ} whereas our proof uses a study of the zeros and poles of certain functions.

	\section{A new characterization of the distinguished boundary}
	
	In the following theorem, we shall give a characterization of points in $b \cP.$ The proof of the theorem will manifest a recipe to construct a $2 \times 2$ unitary matrix $U = [u_{ij}]$ for any $(a,s,p) \in b \cP$ such that $(a,s,p) = (u_{21}, \operatorname{tr}(U), \operatorname{det}(U)).$
	\begin{thm}\label{dist}
		For $(a,s,p)\in\mathbb{C}^{3},$ the following are equivalent:
		\begin{enumerate}
			\item $(a,s,p)\in b \cP$,
			\item There exists a unique unitary matrix $U = \begin{pmatrix}
				u_{11} & u_{12} \\ u_{21} & u_{22}
			\end{pmatrix}$ such that
		$$u_{11} = u_{22} \hspace{5mm} \text{and} \hspace{5mm} (a,s,p) = (u_{21}, \operatorname{tr}(U), \operatorname{det}(U)).$$
		\end{enumerate}
	\end{thm}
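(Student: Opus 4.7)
The plan is to exploit the explicit description $b\cP=K_0=\{(a,s,p): (s,p)\in b\Gamma,\ |a|=\sqrt{1-\tfrac14|s|^2}\}$ provided earlier and reverse-engineer a unitary matrix with equal diagonal entries realizing any given point of $K_0$.

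\textbf{Direction (2)$\Rightarrow$(1).} This is the easy half. Suppose $U=\begin{pmatrix}u_{11}&u_{12}\\u_{21}&u_{22}\end{pmatrix}$ is unitary with $u_{11}=u_{22}$, and set $a=u_{21}$, $s=\operatorname{tr}(U)$, $p=\det(U)$. Since $U$ is unitary, $(s,p)\in b\Gamma$ by the description of $b\Gamma$ recalled above. Moreover $u_{11}=s/2$, and the first column of $U$ being a unit vector gives $|u_{11}|^2+|u_{21}|^2=1$, i.e.\ $|a|^2=1-\tfrac14|s|^2$. Hence $(a,s,p)\in K_0=b\cP$.

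\textbf{Direction (1)$\Rightarrow$(2): existence.} Given $(a,s,p)\in K_0$, I would write down the candidate
\[
U \;=\; \begin{pmatrix} s/2 & -p\,\overline{a} \\ a & s/2 \end{pmatrix}
\]
and verify directly that $U$ is unitary, $u_{11}=u_{22}=s/2$, and $(u_{21},\operatorname{tr}(U),\det(U))=(a,s,p)$. The first-column norm is automatic from $|a|^2=1-\tfrac14|s|^2$; the second-column norm is the same computation; and orthogonality of the columns,
\[
u_{11}\overline{u_{12}}+u_{21}\overline{u_{22}} \;=\; \tfrac{s}{2}(-\overline{p}\,a)+a\,\tfrac{\overline{s}}{2} \;=\; \tfrac{a}{2}(\overline{s}-\overline{p}\,s),
\]
vanishes precisely because $(s,p)\in b\Gamma$ gives $s=\overline{s}p$, hence $\overline{s}=\overline{p}s$. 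Finally $\det(U)=s^2/4+p\,|a|^2=s^2/4+p(1-|s|^2/4)$, and the same $b\Gamma$-identity $s^2/4 = p(\overline{s}p)(s)/(4p) = p|s|^2/4$ shows this equals $p$.

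\textbf{Direction (1)$\Rightarrow$(2): uniqueness.} Suppose $U$ is any unitary with $u_{11}=u_{22}$ realizing $(a,s,p)$. Then $\operatorname{tr}(U)=s$ forces $u_{11}=u_{22}=s/2$, and $u_{21}=a$ is prescribed. Compute
\[
\det(U) \;=\; s^2/4 - u_{12}\,a \;=\; p,
\]
which, when $a\neq 0$, uniquely determines $u_{12}=(s^2/4-p)/a$; a short manipulation using $s=\overline{s}p$ and $|a|^2=1-|s|^2/4$ shows this equals $-p\overline{a}$, matching the explicit matrix above. In the degenerate case $a=0$, the condition $|a|^2=1-|s|^2/4$ forces $|s|=2$, so $|s/2|=1$, and unitarity of the second column $|u_{12}|^2+|s/2|^2=1$ forces $u_{12}=0$, again agreeing with $-p\overline{a}=0$.

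The only subtle point is the algebraic identity $(s^2/4-p)/a=-p\overline{a}$; beyond that, the proof is a direct verification, with the $b\Gamma$-relation $s=\overline{s}p$ doing all the work. No other step presents a genuine obstacle.
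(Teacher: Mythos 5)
Your proof is correct and follows essentially the same route as the paper: both directions rest on the identification $b\cP=K_0$, with (1)$\Rightarrow$(2) obtained by solving for the entries of $U$ with $u_{11}=u_{22}=s/2$, $u_{21}=a$ and checking unitarity via $s=\overline{s}p$, $|p|=1$, $|a|^2=1-\tfrac14|s|^2$, and (2)$\Rightarrow$(1) by the column-norm identity $|u_{11}|^2+|u_{21}|^2=1$. The only cosmetic difference is that your closed form $u_{12}=-p\overline{a}$ (equal to the paper's $(s^2-4p)/(4a)$ when $a\neq 0$) handles the $a=0$ case uniformly, whereas the paper treats it separately.
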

	
	\begin{proof}
		First, we shall prove that (1) $\Rightarrow$ (2). Let $(a,s,p)\in b \cP.$ Since $b \cP = K_{0},$ we have
		$$ |s| \leq 2, \hspace{3mm} s = \overline{s} p,\hspace{3mm} |p|=1\quad \text{ and }\quad |a|^{2} = 1- \frac{|s|^{2}}{4}.$$
		In order to find the desired matrix $U = [u_{ij}]_{2 \times 2}$, we need to solve the following four equations in four variables.
		$$u_{11} - u_{22}  = 0, \hspace{3mm} u_{21} = a, \hspace{3mm} u_{11} +u_{22} = s\quad \text{ and } \quad u_{11}u_{22} - u_{12} u_{21} = p.$$
		If $a \neq 0,$ then we get a unique solution 
		$$(u_{11},u_{12},u_{21},u_{22}) = \left(\frac{s}{2}, \frac{s^{2} - 4p}{4a}, a, \frac{s}{2} \right).$$
		A simple computation will show that the matrix $ U$ is unitary. If $a=0,$ then the set of solutions is 
		$$\{ (u_{11}, u_{12}, u_{21}, u_{22}) = (\frac{s}{2}, \lambda, 0, \frac{s}{2}): \lambda\in \mathbb{C}, s^{2} = 4p\}.$$
		Since $|p| =1,$ we get $|s| =2$ and hence the matrix $U = [u_{ij}]_{2 \times 2}$ is unitary if and only if $\lambda =0.$
		
		Now we shall prove that (2) $\Rightarrow$ (1). Let $U = [u_{ij}]_{2 \times 2} $  be a unitary matrix with   
		$$u_{11} = u_{22} \hspace{5mm} \text{and} \hspace{5mm} (a,s,p) = (u_{21}, \operatorname{tr}(U), \operatorname{det}(U)).$$
		Since $U$ is a unitary, we get that $(s,p) = (\operatorname{tr}(U) , \operatorname{det}(U)) \in b \Gamma,$ also
		$$	4 |a|^{2} + |s|^{2} = 4 |u_{21}|^{2} + |\operatorname{tr}(U)|^{2} = 4(|u_{21}|^{2} + |u_{11}|^{2}) = 4.$$
		This proves that $(a,s,p) \in b \cP.$
	\end{proof}

	\section{Rational $\cP-$inner functions}
	In this section, we give a description of rational $\cP-$inner functions. First, recall that,
	a rational map $x=(x_1, x_2, x_3):{\mathbb{D}}\rightarrow\overline{\cP}$ is said to be {\em rational $\cP-$inner} if
	$$\big(x_1(\lambda), x_2(\lambda), x_3(\lambda)\big)\in b \cP$$
	for all $\lambda\in\mathbb{T}$. Note that if $(s,p)\in \Gamma $ and $\alpha\in\bD,$ then $1 - s \alpha + p \alpha^{2} \neq 0,$ see \cite{AY-model}. For each $\alpha\in\bD,$ we define a function $\Psi_{\alpha}: \mathbb{C} \times \Gamma \to \mathbb{C}$ by $$\Psi_{\alpha}(a,s,p) = \frac{a(1 - |\alpha|^{2})}{1 - s \alpha + p \alpha^{2}}.$$ The function $\Psi_{\alpha}$ is analytic in $\mathbb{C} \times \mathbb{G}$ and continuous on $\mathbb{C} \times \Gamma.$ One of the main results of \cite{ALY-JMAA} contains several characterization of a point to be in $\overline{\cP}$. We recall the one characterization which we shall use later.
	\begin{thm}\cite[Theorem 5.3]{ALY-JMAA}\label{point}
		For $(a,s,p)\in \mathbb{C} \times \Gamma,$ the following are equivalent:
		\begin{enumerate}
			\item $(a,s,p) \in \overline{\cP},$
			\item $|\Psi_\alpha(a,s,p)| \leq 1$ for all $\alpha \in \bD.$
		\end{enumerate}
	\end{thm}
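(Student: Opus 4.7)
My plan is to exploit the realization $\overline{\cP}=\pi(\{A\in M_2(\bC):\|A\|\le 1\})$ together with the observation that $\Psi_\alpha(a,s,p)$ is nothing but a specific matrix entry of an operator-valued M\"obius transform of any $2\times 2$ contraction $A$ with $\pi(A)=(a,s,p)$. The forward direction $(1)\Rightarrow(2)$ will then be an entry-wise estimate, and the reverse direction a fiber analysis over the symmetrized bidisc.

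For $(1)\Rightarrow(2)$, fix a contraction $A=[a_{ij}]$ with $\pi(A)=(a,s,p)$ and, for $\alpha\in\bD$, form $M_\alpha(A):=(A-\bar\alpha I)(I-\alpha A)^{-1}$ (well-defined since $\|\alpha A\|<1$). The identity I would verify by a direct adjugate computation is
$$\Psi_\alpha(a,s,p)=\bigl[M_\alpha(A)\bigr]_{21},$$
obtained from $\det(I-\alpha A)=1-s\alpha+p\alpha^2$ together with the telescoping simplification of the $(2,1)$-entry of $(A-\bar\alpha I)\operatorname{adj}(I-\alpha A)$ to $a(1-|\alpha|^2)$. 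Since $M_\alpha$ is the operator version of the disk automorphism $z\mapsto(z-\bar\alpha)/(1-\alpha z)$ and maps contractions to contractions, we conclude $|\Psi_\alpha(a,s,p)|\le\|M_\alpha(A)\|\le 1$.

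For $(2)\Rightarrow(1)$, I would fix $(s,p)\in\Gamma$ and study the fiber $F_{s,p}:=\{a\in\bC:(a,s,p)\in\overline{\cP}\}$. Conjugating $A$ by $\operatorname{diag}(1,e^{i\theta})$ preserves $\operatorname{tr}(A)$ and $\det(A)$ while multiplying $a_{21}$ by $e^{i\theta}$, so $F_{s,p}$ is circularly symmetric; it contains $0$ (take $A$ upper triangular with roots of $\lambda^2-s\lambda+p$ on its diagonal). Because $a\mapsto\Psi_\alpha(a,s,p)$ is $\bC$-linear in $a$, the set $\{a:\sup_{\alpha\in\bD}|\Psi_\alpha(a,s,p)|\le 1\}$ is a closed disk of radius $r(s,p):=\bigl(\sup_{\alpha\in\bD}(1-|\alpha|^2)/|1-s\alpha+p\alpha^2|\bigr)^{-1}$. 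Step 1 gives $F_{s,p}\subseteq\{|a|\le r(s,p)\}$, so it remains to produce, for each $(s,p)\in\Gamma$, an actual contraction $A$ with $\pi(A)=(a,s,p)$ and $|a|=r(s,p)$; combined with a starlikeness argument in the fiber (shrinking $a_{21}$ by $t\in[0,1]$ while correspondingly inflating $a_{12}$ to fix the determinant and using the criterion $\|A\|\le 1 \Leftrightarrow \|A\|_F^2\le 1+|\det A|^2$ for $2\times 2$ matrices), this yields the full disk.

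The hard part is exactly the construction of this extremal contraction. One route is to chase the equality case of Step 1: the supremum defining $r(s,p)$ is attained at some $\alpha^*\in\bD$ by compactness (the expression vanishes on $\bT$), and equality in $|[M_{\alpha^*}(A)]_{21}|\le\|M_{\alpha^*}(A)\|\le 1$ forces a rigid block structure on $M_{\alpha^*}(A)$; inverting $M_{\alpha^*}$ then recovers an $A$ whose trace and determinant must be checked against the prescribed $(s,p)$. An alternative, alluded to in the excerpt, is the Fej\'er--Riesz route: interpret $|1-s\alpha+p\alpha^2|^2-|a|^2(1-|\alpha|^2)^2\ge 0$ on $\bT$ (where $1-|\alpha|^2=0$ is harmless) and extend by maximum modulus to $\bD$, then factor the resulting non-negative trigonometric polynomial to read off the rows of the contraction. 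In either approach the main obstacle is precisely the same: ensuring that the candidate matrix extracted from the extremal data $\alpha^*$ (or the Fej\'er--Riesz factor) actually has trace $s$ and determinant $p$, and not merely the correct $(2,1)$-entry and operator-norm bound.
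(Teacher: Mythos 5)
A point of context first: the paper does not prove this statement at all --- it is quoted from \cite[Theorem 5.3]{ALY-JMAA} --- so your attempt has to be measured against the original source. Your direction $(1)\Rightarrow(2)$ is correct and complete: $\overline{\cP}=\pi(\{A:\|A\|\le 1\})$ (by scaling $rA\to A$), the adjugate computation giving $\Psi_\alpha(a,s,p)=\big[(A-\bar\alpha I)(I-\alpha A)^{-1}\big]_{21}$ checks out, and $\|(A-\bar\alpha I)(I-\alpha A)^{-1}\|\le 1$ follows from von Neumann's inequality applied to the disc automorphism $z\mapsto(z-\bar\alpha)/(1-\alpha z)$; this is essentially how necessity is argued in \cite{ALY-JMAA}.

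The genuine gap is in $(2)\Rightarrow(1)$, which is the substance of the theorem. You correctly reduce it to producing, for each $(s,p)\in\Gamma$, a contraction with trace $s$, determinant $p$ and $(2,1)$-entry of modulus $r(s,p)=\bigl(\sup_{\alpha\in\bD}(1-|\alpha|^2)/|1-s\alpha+p\alpha^2|\bigr)^{-1}$, but you explicitly concede that you cannot guarantee the candidate extracted from an extremal $\alpha^*$ or from a Fej\'er--Riesz factorization has the prescribed trace and determinant --- and that concession is exactly the missing content. In \cite{ALY-JMAA} this step is handled by computing the supremum explicitly in terms of $\beta=(s-\bar sp)/(1-|p|^2)$ and exhibiting contractions that attain the resulting bound; nothing in your sketch substitutes for that. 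Note also that the supremum need not be attained in $\bD$ (for $(s,p)=(2,1)$ it is $+\infty$), so the ``attained by compactness'' claim is not justified as stated. Finally, your auxiliary starlikeness step is wrong as written: replacing $a_{21}$ by $ta_{21}$ and $a_{12}$ by $a_{12}/t$ preserves trace and determinant but can destroy contractivity (take $A=\tfrac{1}{\sqrt2}\sbm{0&1\\1&0}$ and $t$ small), and the criterion ``$\|A\|\le1\Leftrightarrow\|A\|_F^2\le1+|\det A|^2$'' is false without the additional hypothesis $|\det A|\le1$ (e.g.\ $A=2I$). Fiberwise starlikeness can be repaired --- Schur-triangularize $A=UTU^*$ and shrink the off-diagonal entry of $T$, which is a convex combination of $T$ and $\operatorname{diag}(1,-1)\,T\,\operatorname{diag}(1,-1)$ --- but even with that fix the extremal construction, i.e.\ the heart of sufficiency, remains unproved.
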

	For any positive integer $n$ and for any polynomial $f$ of degree less than or equal to $n$, we define the polynomial $f^{\sim n}$ by the formula,
	$$f^{\sim n}(\lambda)=\lambda^n\overline{f\left(\frac{1}{\overline{\lambda}}\right)}.$$
	For a $\mathbb{C}-$valued rational function $x = f/g,$ where $f$ and $g$ are relatively prime polynomials, we define $\operatorname{deg}(x)$ to be the maximum of $\operatorname{deg}(f), \operatorname{deg}(g).$ Note that if $x$ is a finite Blashcke product, then $\operatorname{deg}(x)$ is same as number of Blaschke factors in the product. The following theorem gives a description of rational $\Gamma-$inner functions.
	\begin{thm} \cite[Proposition 2.2]{ALY-Adv}\label{Gamma-Inner}
		Let $h=(s, p)$ be a rational $\Gamma-$inner function with $\operatorname{deg}(p) =n$. Then  there exist polynomials $D$
		and $N$ such that
		\begin{enumerate}
			\item $\operatorname{deg}(D)$, $\operatorname{deg}(N)\leq{n}$
			\item $N^{\sim n}(\lambda)=N(\lambda)$ on $\overline{\mathbb{D}}$,
			\item $D(\lambda)\neq{0}$ on $\overline{\mathbb{D}}$,
			\item $|N(\lambda)|\leq 2|D(\lambda)|$ on $\overline{\mathbb{D}}$,
			\item $s=\frac{N}{D}$ on $\overline{\mathbb{D}}$, and
			\item $p=\frac{D^{\sim n}}{D}$ on $\overline{\mathbb{D}}$.
		\end{enumerate}
		Conversely, if $N$ and $D$ are polynomials satisfying $(1),(2)$ and $(4)$ above, $D(\lambda) \neq 0$ on $\mathbb{D},$ and $s$ and $p$ are defined by $(5)$ and $(6)$ respectively, then $h = (s, p)$ is a rational $\Gamma-$inner function with $\operatorname{deg}(p) = n$.
		
		Furthermore, a pair of polynomials $N'$ and $D'$ satisfies $(1)–(6)$ if and only if there
		exists a non-zero real number $t$ such that $N=tN'$ and $D=tD'.$
	\end{thm}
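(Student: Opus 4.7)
My plan is to construct $D$ directly from the representation of $p$ as a finite Blaschke product, then define $N := sD$ and verify the listed properties by extending the inner condition $s=\bar{s}p$ from $\mathbb{T}$ to a rational identity on all of $\mathbb{C}$ via the reflection map $f(\lambda)\mapsto \overline{f(1/\bar{\lambda})}$.

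Since $p$ is rational with $|p|=1$ on $\mathbb{T}$ and $\deg p = n$, $p$ is a finite Blaschke product of degree $n$ up to a unimodular constant, so $p = c\tilde D^{\sim n}/\tilde D$ with $|c|=1$ and $\tilde D$ a polynomial of degree $n$ with zeros outside $\overline{\mathbb{D}}$. Absorbing $c$ by rescaling $\tilde D\to t\tilde D$ with $t^2=\bar c$ (and using $(t\tilde D)^{\sim n} = \bar t\,\tilde D^{\sim n}$) yields $D$ with $p=D^{\sim n}/D$ and $D\neq 0$ on $\overline{\mathbb{D}}$, giving $(3)$ and $(6)$. Setting $N := sD$, the identity $s=\bar s p$ on $\mathbb{T}$ combined with $D^{\sim n}(\lambda)=\lambda^n\overline{D(\lambda)}$ for $\lambda\in\mathbb{T}$ gives $N(\lambda) = \lambda^n\overline{N(\lambda)}$ on $\mathbb{T}$; this is exactly $(2)$ once $N$ is known to be a polynomial of degree $\leq n$. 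Property $(4)$ then follows from $|N/D|=|s|\leq 2$ on $\mathbb{T}$ and the maximum modulus principle applied to $s=N/D$, which is holomorphic on $\overline{\mathbb{D}}$.

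The crux is the polynomiality and degree bound of $N=sD$. The equality $s=\bar s p$ on $\mathbb{T}$ promotes to the rational identity $sD = s^\# D^{\sim n}$ on $\mathbb{C}$, where $s^\#(\lambda):=\overline{s(1/\bar\lambda)}$ (since the two sides are rational functions agreeing on $\mathbb{T}$). The left-hand side is holomorphic on $\overline{\mathbb{D}}$, while the right-hand side has poles inside $\mathbb{D}$ at each reflection $1/\bar q$ of a pole $q$ of $s$; equality forces $D^{\sim n}(1/\bar q)=0$, which via $D^{\sim n}(1/\bar q)= (1/\bar q)^n\overline{D(q)}$ is equivalent to $D(q)=0$. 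So the denominator of $s$ in lowest terms divides $D$, making $N$ a polynomial. Writing $N(\lambda) = \lambda^n\overline{N(\lambda)}$ in terms of $m:=\deg N$ gives $\lambda^{m-n}N(\lambda) = N^{\sim m}(\lambda)$; since the right-hand side has degree at most $m$, this forces $m\leq n$, and the relation collapses to $N=N^{\sim n}$, yielding $(2)$.

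For the converse, starting from $N,D$ satisfying $(1)$, $(2)$, $(4)$ with $D\neq 0$ on $\mathbb{D}$, I set $s=N/D$, $p=D^{\sim n}/D$, and verify $(s,p)\in b\Gamma$ on $\mathbb{T}$: $|p|=1$ from $|D^{\sim n}|=|D|$ on $\mathbb{T}$, $|s|\leq 2$ from $(4)$, and $s=\bar s p$ by direct substitution using $(2)$. Uniqueness reduces to a reflection argument: if $(N',D')$ is another such pair, then $D^{\sim n}/D=(D')^{\sim n}/D'$ rearranges to $f=f^\#$ for $f:=D/D'$; since $f$ is holomorphic and nonzero on $\overline{\mathbb{D}}$ by $(3)$, the reflection identity extends it holomorphically and without zeros to the Riemann sphere, so $f$ is a constant, and being real on $\mathbb{T}$ it is a nonzero real $t$, giving $D=tD'$ and $N=tN'$. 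The principal obstacle in the whole argument is the polynomiality of $N=sD$: a priori $s$ may have poles unrelated to the zeros of $D$, and only by lifting the boundary relation $s=\bar s p$ to a rational identity on $\mathbb{C}$ and matching singularities on each side does one force these poles to lie at the zeros of $D$.
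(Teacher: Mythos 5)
A preliminary remark: the paper offers no proof of this statement — it is quoted from Agler--Lykova--Young \cite{ALY-Adv} — so there is no in-paper argument to compare with, and I judge your proposal on its own terms. Your necessity and uniqueness arguments are essentially correct and follow the natural route: construct $D$ from the Blaschke factorization of $p$ (absorbing the unimodular constant into $D$), promote the boundary relation $s=\overline{s}p$ to the rational identity $sD=s^{\#}D^{\sim n}$ with $s^{\#}(\lambda)=\overline{s(1/\overline{\lambda})}$, and match singularities to conclude that the denominator of $s$ divides $D$, so $N=sD$ is a polynomial; the degree comparison then gives $\deg N\leq n$ and $N=N^{\sim n}$, and $(4)$ follows from the maximum principle. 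One detail to add: for divisibility you must match the \emph{orders} of the poles of $s^{\#}$ at the reflected points $1/\overline{q}$ against the order of vanishing of $D^{\sim n}$ there, not merely note that $D^{\sim n}(1/\overline{q})=0$; the same computation yields this, but it should be said. The reflection argument in the ``furthermore'' part ($f=D/D'$ is zero-free and pole-free on the sphere, hence a nonzero real constant) is fine.

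The genuine gap is in the converse. Being rational $\Gamma$-inner requires (a) that $s,p$ be rational with poles outside $\overline{\mathbb{D}}$ and continuous boundary values, (b) that $h=(s,p)$ map $\mathbb{D}$ into $\Gamma$, and (c) that the boundary values lie in $b\Gamma$; you verify only (c). Point (b) does not follow formally from (c): one needs a maximum-principle argument in the spirit of Lemma \ref{suff}, e.g.\ applying the maximum modulus principle to $\omega\mapsto(2\omega p-s)/(2-\omega s)$ composed with $h$, or invoking polynomial convexity of $\Gamma$ together with the fact that the polynomial hull of $b\Gamma$ is $\Gamma$. Moreover, the hypothesis is only $D\neq 0$ on $\mathbb{D}$, so $D$ may vanish on $\mathbb{T}$; your boundary computations ($|p|=1$ and $s=\overline{s}p$ at every point of $\mathbb{T}$) and point (a) then require a cancellation analysis at such zeros which you never give, and the asserted conclusion $\deg p=n$ is not addressed at all. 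That last point is exactly where boundary zeros of $D$ matter: with $N=0$, $D(\lambda)=1-\lambda$, $n=1$, all of $(1),(2),(4)$ and $D\neq 0$ on $\mathbb{D}$ hold, yet $p\equiv -1$ has degree $0$, so the degree claim cannot be obtained without either excluding or carefully handling zeros of $D$ on $\mathbb{T}$. As written, the converse portion of your proposal is therefore incomplete.
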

	
   Note that if $x = (x_{1}, x_{2}, x_{3})$ is a rational $\cP-$inner function, then in particular, 
   \begin{enumerate}
   \item $(x_{2}(\lambda), x_{3}(\lambda)) \in \mathbb{G}$ for every $\lambda \in \mathbb{D};$ and 
   \item $(x_{2}(\lambda), x_{3}(\lambda)) \in b \Gamma$ for every $\lambda \in \mathbb{T}.$
   \end{enumerate}
  Consequently, it is necessary for $x = (x_{1}, x_{2}, x_{3})$ to be rational $\cP-$inner that $(x_{2}, x_{3})$ be $\Gamma-$inner. The latter class is completely understood in view of Theorem \ref{Gamma-Inner}. Thus, our job reduces to understanding just the first coordinate of a rational $\cP$-inner function. This is what we do in the following sequence of preliminary results.

	\begin{lemma}\label{suff}
		If $(x_{2},x_{3})$ is a rational $\Gamma-$inner function and $x_{1}$ is a rational function with poles outside $\overline{\mathbb{D}}$ such that
		$$|x_{1}(\lambda)|^{2} = 1 - \frac{|x_{2}(\lambda)|^{2}}{4}$$
		for all $\lambda \in \mathbb{T},$ then $x = (x_{1},x_{2},x_{3})$ is a rational $\cP-$inner function.
	\end{lemma}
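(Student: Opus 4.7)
The strategy is to verify the two defining conditions of a rational $\cP$-inner function separately: that $x$ sends $\mathbb{T}$ into $b\cP$, and that $x$ sends $\mathbb{D}$ into $\overline{\cP}$ (with no poles in $\overline{\mathbb{D}}$). The boundary condition is the easy half. For $\lambda\in\mathbb{T}$, the $\Gamma$-inner hypothesis gives $(x_{2}(\lambda),x_{3}(\lambda))\in b\Gamma$, so $|x_{2}(\lambda)|\leq 2$, $|x_{3}(\lambda)|=1$, and $x_{2}(\lambda)=\overline{x_{2}(\lambda)}x_{3}(\lambda)$; combined with the given identity $|x_{1}(\lambda)|^{2}=1-|x_{2}(\lambda)|^{2}/4$, this places $x(\lambda)$ exactly in the set $K_{0}=b\cP$.

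For the interior condition, the plan is to use the characterization of $\overline{\cP}$ via the functions $\Psi_{\alpha}$ from Theorem \ref{point}. First I would record that $x_{1},x_{2},x_{3}$ are all analytic on $\overline{\mathbb{D}}$: this holds for $x_{1}$ by hypothesis, and for $x_{2},x_{3}$ because rational $\Gamma$-inner functions have poles off $\overline{\mathbb{D}}$ (e.g.\ by Theorem \ref{Gamma-Inner} and the fact that $D$ has no zeros in $\overline{\mathbb{D}}$). Next, since $(x_{2},x_{3})$ maps $\overline{\mathbb{D}}$ into $\Gamma$, the fact cited from \cite{AY-model} tells us that the denominator $1-x_{2}(\lambda)\alpha+x_{3}(\lambda)\alpha^{2}$ is non-vanishing on $\overline{\mathbb{D}}$ for every fixed $\alpha\in\mathbb{D}$. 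Consequently, for each $\alpha\in\mathbb{D}$ the composition
$$\lambda\longmapsto \Psi_{\alpha}\bigl(x_{1}(\lambda),x_{2}(\lambda),x_{3}(\lambda)\bigr)=\frac{x_{1}(\lambda)\bigl(1-|\alpha|^{2}\bigr)}{1-x_{2}(\lambda)\alpha+x_{3}(\lambda)\alpha^{2}}$$
is analytic on $\mathbb{D}$ and continuous on $\overline{\mathbb{D}}$.

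Now the maximum modulus principle finishes the job. On $\mathbb{T}$ we have already shown that $x(\lambda)\in b\cP\subset\overline{\cP}$, so Theorem \ref{point} applied pointwise gives $|\Psi_{\alpha}(x(\lambda))|\leq 1$ for every $\alpha\in\mathbb{D}$ and every $\lambda\in\mathbb{T}$. By the maximum principle the same bound persists throughout $\mathbb{D}$, and applying the reverse direction of Theorem \ref{point} yields $x(\lambda)\in\overline{\cP}$ for all $\lambda\in\mathbb{D}$. Together with the boundary statement, this shows $x$ is rational $\cP$-inner. I do not foresee any genuine obstacle here: the argument is essentially the assembly of (i) the description $b\cP=K_{0}$, (ii) the characterization of $\overline{\cP}$ by the family $\{\Psi_{\alpha}\}_{\alpha\in\mathbb{D}}$, and (iii) the maximum modulus principle; the only subtlety worth being careful about is checking that $\Psi_{\alpha}\circ x$ genuinely has no poles on $\overline{\mathbb{D}}$, which reduces to the two observations on denominators above.
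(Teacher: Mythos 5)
Your proposal is correct and follows essentially the same route as the paper: establish $x(\mathbb{T})\subset K_{0}=b\cP$ directly from the hypotheses, then push the bound $|\Psi_{\alpha}\circ x|\leq 1$ from $\mathbb{T}$ into $\mathbb{D}$ by the maximum modulus principle and invoke Theorem \ref{point} in both directions. The only difference is that you spell out the analyticity of $\Psi_{\alpha}\circ x$ on $\overline{\mathbb{D}}$ (non-vanishing of $1-x_{2}\alpha+x_{3}\alpha^{2}$) more explicitly than the paper does, which is a harmless refinement.
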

	\begin{proof}
		First note that $x(\lambda) = \big(x_{1}(\lambda), x_{2}(\lambda), x_{3}(\lambda)\big) \in b \cP$ for all $\lambda\in\mathbb{T}.$ We need to show that $\big(x_{1}(\lambda), x_{2}(\lambda), x_{3}(\lambda)\big) \in \overline{\cP}$ for all $\lambda \in \bD.$ Fix  $\alpha\in\bD$ and consider the map $\Psi_{\alpha} \circ x: \overline{\bD} \to \mathbb{C}. $ The map $\Psi_{\alpha} \circ x$ is analytic in $\bD$ and continuous on $\overline{\bD}.$ Since $x(\lambda)\in b \cP \subset \overline{\cP}$ for $\lambda\in\mathbb{T},$ by Theorem \ref{point}, for all $\lambda\in\mathbb{T}$ we get
		$$|\Psi_{\alpha}\big(x(\lambda)\big)| = |\Psi_{\alpha}\big(x_{1}(\lambda), x_{2}(\lambda), x_{3}(\lambda)\big)| \leq 1$$
		for all $\alpha \in \bD.$ By the maximum modulus principle, for $\lambda \in \bD$ we get
		$$|\Psi_{\alpha}\big(x(\lambda)\big)| = |\Psi_{\alpha}\big(x_{1}(\lambda), x_{2}(\lambda), x_{3}(\lambda)\big)| \leq 1$$
		for all $\alpha \in \bD.$ Again by Theorem \ref{point}, $x(\lambda) = \big(x_{1}(\lambda), x_{2}(\lambda), x_{3}(\lambda)\big) \in \overline{\cP}$ for all $\lambda \in \overline{\bD}.$ Thus, $x = (x_{1},x_{2},x_{3})$ is a rational map from $\bD$ to $\overline{\cP}$ which sends $\mathbb{T}$ into $b \cP.$ This proves that $x = (x_{1},x_{2},x_{3})$ is a rational $ \cP-$inner function.
		
	\end{proof}
	Now, we shall give some examples of rational $\cP-$inner functions.
	\begin{example} Let $B$ be a finite Blaschke product. Then the function $x:\mathbb{D}:\rightarrow\overline{\cP}$ defined by
		$$x(\lambda)=\big(B(\lambda), 0, B(\lambda)\big)$$ is rational $\cP-$inner.
	\end{example}
	\begin{proof}
		It is easy to see that $(0, B(\lambda))$ is a rational $\Gamma-$inner function. Now we show that, for $\lambda\in\mathbb{T}$, the point $x(\lambda)$ lies in $b \cP$. Here
		$$x_1(\lambda)=B(\lambda), \quad x_2(\lambda)= 0, \quad \text{and} \quad  x_3(\lambda)=B(\lambda).$$
		Since $|B(\lambda)|=1$ on the circle, it follows that
		$$|x_{1}(\lambda)|^2= 1=1-\frac{|x_2(\lambda)|^2}{4}.$$
		Thus, by Lemma \ref{suff}, $x$ is a rational $\cP-$inner function.
	\end{proof}
	The following lemma gives a class of rational $\cP-$inner functions.
	\begin{lemma}
		Let $\beta\in\mathbb{T}$. Then the map $x:\mathbb{D}\rightarrow\overline{\cP}$ by the setting $$\lambda\mapsto\left(\frac{\beta-\overline{\beta}\lambda}{2}, \beta+\overline{\beta}\lambda, \lambda\right)$$ is rational $\cP-$inner.
	\end{lemma}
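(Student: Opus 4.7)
The plan is to apply Lemma \ref{suff}. Writing $x_1(\lambda) = (\beta-\overline{\beta}\lambda)/2$, $x_2(\lambda) = \beta+\overline{\beta}\lambda$, $x_3(\lambda) = \lambda$, I need to check two things: that $(x_2, x_3)$ is rational $\Gamma$-inner, that $x_1$ is rational with no poles in $\overline{\mathbb{D}}$ (which is immediate since $x_1$ is a polynomial), and that the modulus identity $|x_1(\lambda)|^2 = 1 - |x_2(\lambda)|^2/4$ holds for $\lambda \in \mathbb{T}$.

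For the first point, I would invoke the converse direction of Theorem \ref{Gamma-Inner} with $n=1$, taking $D(\lambda) \equiv 1$ and $N(\lambda) = \beta + \overline{\beta}\lambda$. One checks $\operatorname{deg}(D), \operatorname{deg}(N) \le 1$, $D$ is nonvanishing on $\overline{\mathbb{D}}$, and $N^{\sim 1}(\lambda) = \lambda\,\overline{N(1/\overline{\lambda})} = \lambda(\overline{\beta} + \beta/\lambda) = N(\lambda)$. Moreover $|N(\lambda)| \le |\beta| + |\overline{\beta}||\lambda| \le 2 = 2|D(\lambda)|$ on $\overline{\mathbb{D}}$. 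Then $s = N/D = \beta + \overline{\beta}\lambda = x_2$ and $p = D^{\sim 1}/D = \lambda = x_3$ recover the pair, so $(x_2, x_3)$ is rational $\Gamma$-inner.

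For the modulus identity on $\mathbb{T}$, I would just expand directly using $\overline{\lambda} = 1/\lambda$ and $|\beta| = 1$:
\begin{align*}
|x_1(\lambda)|^2 &= \tfrac{1}{4}(\beta - \overline{\beta}\lambda)(\overline{\beta} - \beta\overline{\lambda}) = \tfrac{1}{4}\bigl(2 - \beta^2\overline{\lambda} - \overline{\beta}^2\lambda\bigr),\\
\tfrac{1}{4}|x_2(\lambda)|^2 &= \tfrac{1}{4}(\beta + \overline{\beta}\lambda)(\overline{\beta} + \beta\overline{\lambda}) = \tfrac{1}{4}\bigl(2 + \beta^2\overline{\lambda} + \overline{\beta}^2\lambda\bigr),
\end{align*}
so the two quantities sum to $1$. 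With all three hypotheses of Lemma \ref{suff} verified, the conclusion follows.

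There is really no genuine obstacle here; the only mild subtlety is recognizing that $(x_2,x_3) = (\beta+\overline{\beta}\lambda,\lambda)$ fits into the framework of Theorem \ref{Gamma-Inner} with the trivial denominator $D \equiv 1$, after which the verification is a short algebraic computation.
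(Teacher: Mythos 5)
Your proposal is correct and follows essentially the same route as the paper: both apply Lemma \ref{suff} and verify the identity $4|x_1(\lambda)|^2+|x_2(\lambda)|^2=4$ on $\mathbb{T}$ by the same direct expansion using $|\beta|=1$ and $\overline{\lambda}=1/\lambda$. The only divergence is in the sub-step showing $(x_2,x_3)=(\beta+\overline{\beta}\lambda,\lambda)$ is rational $\Gamma$-inner: the paper checks directly that $\mathbb{T}$ is sent into $b\Gamma$ (via $x_2=\overline{x_2}\,x_3$, $|x_3|=1$, $|x_2|\le 2$) and asserts that $\mathbb{D}$ is sent into $\Gamma$, whereas you invoke the converse of Theorem \ref{Gamma-Inner} with $N=\beta+\overline{\beta}\lambda$, $D\equiv 1$, $n=1$; both are valid, and your route has the slight advantage of automatically covering the membership of the image of $\mathbb{D}$ in $\Gamma$, which the paper leaves as an unelaborated assertion.
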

	\begin{proof} By virtue of Lemma \ref{suff}, we need to show that $(x_2, x_3)$ is a $\Gamma-$inner function, and  the following equality holds for $\lambda\in\mathbb{T}$,
		$$4|x_1(\lambda)|^2+|x_2(\lambda)|^2=4.$$
		Here,
		$$x_1(\lambda)= \frac{\beta-\overline{\beta}\lambda}{2}, \quad x_2(\lambda)=\beta+\overline{\beta}\lambda \quad\text{ and } \quad x_3(\lambda)=\lambda.$$
		Note that, for $\lambda\in\mathbb{T}$,  $x_{2}(\lambda) = \overline{x_{2}(\lambda)} x_{3}(\lambda)$, $|x_3(\lambda)|=1,$  and $|x_{2}(\lambda)| \leq 2.$ So the map $(x_{2},x_{3})$ maps $\mathbb{T}$ into $b \Gamma.$ Since $(x_{2}(\lambda), x_{3}(\lambda)) \in \Gamma$ for all $\lambda\in\bD,$ it follows that $(x_2, x_3)$ is a rational $\Gamma-$inner function. Now, for $\lambda\in\mathbb{T}$,
		\begin{align}\label{PX1}
			|x_1(\lambda)|^2&=x_1(\lambda)\overline{x_1(\lambda)}=1/4(\beta-\overline{\beta}\lambda)(\overline{\beta}-\beta\overline{\lambda})\nonumber\\
			&=\frac{1}{4}\left[|\beta|^2-\overline{\beta}^2\lambda-\beta^2\overline{\lambda}+|\beta|^2|\lambda|^2 \right]\nonumber\\
			&=\frac{1}{2}-\frac{1}{4}\left[\overline{\beta}^2\lambda+\beta^2\overline{\lambda} \right].
		\end{align}
		We also have
		\begin{align}\label{PX2}
			|x_2(\lambda)|^2&=x_2(\lambda)\overline{x_2(\lambda)}= (\beta+\overline{\beta}\lambda)(\overline{\beta}+\beta\overline{\lambda})\nonumber\\
			&=|\beta|^2+\beta^2\overline{\lambda}+\overline{\beta}^2\lambda+|\beta|^2|\lambda|^2\nonumber\\
			&=2+\beta^2\overline{\lambda}+\overline{\beta}^2\lambda
		\end{align}
		Thus, from equations \eqref{PX1} and \eqref{PX2}, for all $\lambda\in\mathbb{T}$,
		$$4|x_1(\lambda)|^2+|x_2(\lambda)|^2=4.$$
	\end{proof}
	The next two lemmas give some more examples of rational $\cP-$inner functions. These will also be used in the proof of the main theorem of this section.
	\begin{lemma}
		If $x = (x_{1},x_{2},x_{3})$ is a rational $\cP-$inner function, then $x_{B} \bydef (Bx_{1}, x_{2}, x_{3})$ is also a rational $\cP-$inner function for any finite Blaschke product $B.$
	\end{lemma}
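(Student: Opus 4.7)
The plan is to reduce immediately to the sufficient condition supplied by Lemma \ref{suff}. According to that lemma, to conclude that $x_B = (Bx_1, x_2, x_3)$ is rational $\cP$-inner, I only need to verify three things: that $(x_2, x_3)$ is rational $\Gamma$-inner, that $Bx_1$ is rational with poles outside $\overline{\bD}$, and that $4|Bx_1(\lambda)|^2 + |x_2(\lambda)|^2 = 4$ for every $\lambda \in \mathbb{T}$.

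First I would dispose of the $\Gamma$-inner condition: since $x$ is rational $\cP$-inner, the point $(x_2(\lambda), x_3(\lambda))$ lies in $b\Gamma$ for $\lambda \in \mathbb{T}$ and in $\Gamma$ for $\lambda \in \bD$ (by the two observations highlighted in the paragraph before Lemma \ref{suff}), so $(x_2, x_3)$ is indeed rational $\Gamma$-inner. Next, the pole condition is routine: $x_1$ has all its poles outside $\overline{\bD}$ by hypothesis, and a finite Blaschke product $B$ has all its poles outside $\overline{\bD}$ as well, so the product $Bx_1$ inherits this property.

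The final step is the boundary modulus identity. Since $x$ is $\cP$-inner and $b\cP = K_0$, for each $\lambda \in \mathbb{T}$ we have $|x_1(\lambda)|^2 = 1 - \tfrac{1}{4}|x_2(\lambda)|^2$. Because $|B(\lambda)| = 1$ on $\mathbb{T}$, multiplication by $B$ does not change the modulus of $x_1$ on the circle, so
\[
|Bx_1(\lambda)|^2 = |B(\lambda)|^2 \, |x_1(\lambda)|^2 = |x_1(\lambda)|^2 = 1 - \tfrac{1}{4}|x_2(\lambda)|^2.
\]
Applying Lemma \ref{suff} to $(Bx_1, x_2, x_3)$ then finishes the proof.

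There is no real obstacle here; the statement is essentially an easy corollary of Lemma \ref{suff}, exploiting the key feature that the first coordinate's constraint on $b\cP$ is purely a modulus condition tied to $x_2$, which is invariant under multiplication by unimodular factors on $\mathbb{T}$.
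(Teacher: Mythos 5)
Your proof is correct and follows essentially the same route as the paper: both reduce the claim to Lemma \ref{suff}, use that $(x_2,x_3)$ is $\Gamma$-inner, and verify the boundary identity via $|B(\lambda)|=1$ on $\mathbb{T}$. The only difference is that you spell out the (routine) pole condition for $Bx_1$, which the paper leaves implicit.
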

	\begin{proof}
		Since $(x_{1},x_{2},x_{3})$ is a rational $\cP-$inner function, $(x_{2},x_{3})$ is a $\Gamma-$inner function. For $\lambda\in\mathbb{T},$
		\begin{align*}
			4 |Bx_{1}(\lambda)|^{2} + |x_{2}(\lambda)|^{2}
			& = 4 |B(\lambda)|^{2} |x_{1}(\lambda)|^{2} + |x_{2}(\lambda)|^{2} \\
			& = 4|x_{1}(\lambda)|^{2} + |x_{2}(\lambda)|^{2}\\
			& = 4.
		\end{align*}
		Thus, by Lemma \ref{suff}, $x_{B} = (Bx_{1},x_{2},x_{3})$ is a rational $\cP-$inner function.
	\end{proof}
	\begin{lemma}
		If $B$ is a finite Blaschke product, $x_{1}$ is a rational function with poles outside $\overline{\mathbb{D}}$ and $(Bx_{1}, x_{2}, x_{3})$ is a rational $\cP-$inner function, then $(x_{1},x_{2},x_{3})$ is also a rational $\cP-$inner function.
	\end{lemma}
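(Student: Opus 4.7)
The plan is to apply Lemma \ref{suff} directly to $(x_1, x_2, x_3)$. The hypotheses we must verify are: (i) $(x_2, x_3)$ is a rational $\Gamma$-inner function, (ii) $x_1$ is a rational function with poles outside $\overline{\mathbb{D}}$, and (iii) $|x_1(\lambda)|^2 = 1 - \tfrac{1}{4}|x_2(\lambda)|^2$ for all $\lambda \in \mathbb{T}$. Condition (ii) is given as a hypothesis.

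For (i), I would use the observation made just before Lemma \ref{suff}: since $(Bx_1, x_2, x_3)$ is rational $\cP$-inner, the projection $(x_2, x_3)$ automatically sends $\mathbb{D}$ into $\mathbb{G}$ and $\mathbb{T}$ into $b\Gamma$, so $(x_2, x_3)$ is rational $\Gamma$-inner.

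For (iii), I would again use that $(Bx_1, x_2, x_3)$ is rational $\cP$-inner, so for $\lambda \in \mathbb{T}$ the triple $\big(B(\lambda)x_1(\lambda), x_2(\lambda), x_3(\lambda)\big)$ lies in $b\cP = K_0$. By the defining equation of $K_0$,
\[
4\,|B(\lambda)x_1(\lambda)|^2 + |x_2(\lambda)|^2 = 4 \quad \text{for all } \lambda \in \mathbb{T}.
\]
Because $B$ is a finite Blaschke product, $|B(\lambda)| = 1$ on $\mathbb{T}$, which gives $4|x_1(\lambda)|^2 + |x_2(\lambda)|^2 = 4$ on $\mathbb{T}$, i.e.\ exactly the equality required by Lemma \ref{suff}.

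With (i), (ii), (iii) in hand, Lemma \ref{suff} immediately yields that $(x_1, x_2, x_3)$ is rational $\cP$-inner. There is no real obstacle here; the lemma is a direct converse to the preceding one, and the key point is simply that multiplying the first coordinate by a unimodular function on $\mathbb{T}$ preserves the defining modulus equation for $K_0$.
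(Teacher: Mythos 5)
Your proposal is correct and follows essentially the same route as the paper: the paper also notes that $(x_2,x_3)$ is $\Gamma$-inner because $(Bx_1,x_2,x_3)$ is $\cP$-inner, uses $|B(\lambda)|=1$ on $\mathbb{T}$ to transfer the identity $4|Bx_1|^2+|x_2|^2=4$ to $4|x_1|^2+|x_2|^2=4$, and then invokes Lemma \ref{suff}. No gaps.
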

	\begin{proof}
		Since $(Bx_{1},x_{2},x_{3})$ is a rational $\cP-$inner function, $(x_{2},x_{3})$ is a $\Gamma-$inner function. For $\lambda \in \mathbb{T},$
		\begin{align*}
			4 |x_{1}(\lambda)|^{2} + |x_{2}(\lambda)|^{2}
			& = 4 |B(\lambda)|^{2} |x_{1}(\lambda)|^{2} + |x_{2}(\lambda)|^{2}\\
			& = 4 |Bx_{1}(\lambda)|^{2} + |x_{2}(\lambda)|^{2} \\
			& = 4.
		\end{align*}
		Thus, by Lemma \ref{suff}, $(x_{1},x_{2},x_{3})$ is a rational $\cP-$inner function.
	\end{proof}
	If $f(z) = \sum\limits_{i=1}^{n} a_{i}z^{i}$ is a polynomial, then define $$f^{\vee} (z) = \sum\limits_{i=1}^{n} \overline{a_{i}} z^{i}.$$ If $f_{1},f_{2}$ are two polynomials and $r = f_{1}/f_{2}$ is a rational function, then define $r^{\vee} = f_{1}^{\vee}/f_{2}^{\vee}.$ The following proposition is an intermediate step to prove the main theorem of this section.
	
	\begin{proposition}\label{D}
		Let $x = (x_{1},x_{2},x_{3})$ be a rational $\cP-$inner function.  Let $x_{1} = B \frac{f_{1}}{g_{1}}$ where $B$ is a Blaschke product and $f_{1},g_{1}$ are relatively prime polynomials such that $f_{1} / g_{1}$ has no Blaschke factor. Then the following hold.
		\begin{enumerate}
			\item If $g_{1}(a) = 0,$ then $x_{1}^{\vee} (1/a) \neq 0$; and 
			\item if $x_{2} = f_{2} / g_{2},$ where $f_{2}$ and $g_{2}$ are relatively prime polynomials, then $g_{1} = t g_{2}$ for some non-zero constant $t.$ 
		\end{enumerate}
	\end{proposition}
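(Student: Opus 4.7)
The strategy is to derive a single polynomial identity from the boundary condition $b\cP = K_{0}$ and then substitute judiciously chosen values. Since $x$ is $\cP$-inner, $4|x_{1}(\lambda)|^{2} + |x_{2}(\lambda)|^{2} = 4$ on $\mathbb{T}$; writing $|x_{i}(\lambda)|^{2} = x_{i}(\lambda) x_{i}^{\vee}(1/\lambda)$ on $\mathbb{T}$ and invoking the identity theorem yields the rational identity $4 x_{1}(\lambda) x_{1}^{\vee}(1/\lambda) + x_{2}(\lambda) x_{2}^{\vee}(1/\lambda) \equiv 4$. Because $B$ is a finite Blaschke product, $|B|=1$ on $\mathbb{T}$ gives $B(\lambda) B^{\vee}(1/\lambda) \equiv 1$ as a rational identity, so $B$ cancels in the product $x_{1}(\lambda) x_{1}^{\vee}(1/\lambda) = f_{1}(\lambda) f_{1}^{\vee}(1/\lambda)/[g_{1}(\lambda) g_{1}^{\vee}(1/\lambda)]$. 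Clearing denominators produces the key polynomial identity
\[
4 f_{1}(\lambda)f_{1}^{\vee}(1/\lambda)\,g_{2}(\lambda)g_{2}^{\vee}(1/\lambda) + f_{2}(\lambda)f_{2}^{\vee}(1/\lambda)\,g_{1}(\lambda)g_{1}^{\vee}(1/\lambda) = 4 g_{1}(\lambda)g_{1}^{\vee}(1/\lambda)\,g_{2}(\lambda)g_{2}^{\vee}(1/\lambda).
\]

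For Part (1), substituting $\lambda = a$ with $g_{1}(a)=0$ collapses the identity to $f_{1}(a) f_{1}^{\vee}(1/a) g_{2}(a) g_{2}^{\vee}(1/a) = 0$. Coprimality of $f_{1}$ and $g_{1}$ gives $f_{1}(a) \neq 0$. Since $x_{1}$ has its poles outside $\overline{\mathbb{D}}$, one has $|a|>1$, so $1/\bar{a} \in \mathbb{D}$; the no-Blaschke-factor hypothesis, applied with $\alpha = 1/\bar{a}$ (for which $g_{1}(1/\bar{\alpha})=g_{1}(a)=0$), gives $f_{1}(1/\bar{a}) \neq 0$, equivalently $f_{1}^{\vee}(1/a) \neq 0$. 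This non-vanishing of the polynomial part of $x_{1}^{\vee}$ at $1/a$ is the assertion of Part (1).

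For Part (2), Part (1) combined with the substitution above forces $g_{2}(a) g_{2}^{\vee}(1/a) = 0$. Since $g_{2}$ has no roots in $\overline{\mathbb{D}}$ (as $x_{2}$'s poles lie outside $\overline{\mathbb{D}}$) and $|1/\bar{a}|<1$, one has $g_{2}^{\vee}(1/a) \neq 0$, forcing $g_{2}(a) = 0$. Thus every root of $g_{1}$ is a root of $g_{2}$, and matching multiplicities follows from a pole-order comparison at $\lambda = a$ in $4 x_{1}(\lambda)x_{1}^{\vee}(1/\lambda) = 4 - x_{2}(\lambda) x_{2}^{\vee}(1/\lambda)$. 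The reverse inclusion (every root of $g_{2}$ is a root of $g_{1}$) is obtained by evaluating the polynomial identity at $\lambda = b$ with $g_{2}(b)=0$; the main obstacle is ruling out the parasitic case $f_{2}^{\vee}(1/b)=0$ with $g_{1}(b)\neq 0$. This is the heart of the proof and is handled using the $\Gamma$-inner description in Theorem \ref{Gamma-Inner}: writing $x_{2}=N/D$ with $N^{\sim n}=N$ forces the zeros of $N$, and hence of $f_{2}$, to come in pairs $\{c, 1/\bar{c}\}$; tracking these pairings through $f_{2}=N/\gcd(N,D)$ together with a degree count of the polynomial identity forces the zero multisets of $g_{1}$ and $g_{2}$ to coincide, yielding $g_{1} = t g_{2}$ for a nonzero constant $t$.
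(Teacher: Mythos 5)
Your Part (1) and the forward containment are essentially the paper's own argument: Part (1) is exactly the coprimality/no-Blaschke-factor observation, and ``every zero of $g_1$ is a zero of $g_2$ (with at least the same multiplicity)'' comes from the rational identity $x_1(\lambda)x_1^{\vee}(1/\lambda)=1-\tfrac14 x_2(\lambda)x_2^{\vee}(1/\lambda)$ plus a pole-order comparison, which is also how the paper proceeds (your ``cleared'' identity is a Laurent identity rather than a polynomial one, but evaluation at $\lambda=a,b\neq 0$ is legitimate). The genuine gap is the step you yourself call the heart of the proof: excluding the parasitic case $f_2^{\vee}(1/b)=0$ with $g_1(b)\neq 0$ at a zero $b$ of $g_2$. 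You only sketch this (pairing of zeros of $N$ via $N^{\sim n}=N$ plus an unspecified degree count), and no such argument can succeed, because the configuration you must exclude actually occurs. Take $B(\lambda)=\frac{2\lambda-1}{2-\lambda}$ and $x=\bigl(\tfrac{\sqrt{15}}{4},\,-\tfrac12 B,\,B^{2}\bigr)$. Then $(x_2,x_3)$ is rational $\Gamma$-inner (it is the symmetrization of $(\mu B,\overline{\mu}B)$ where $\mu^{2}+\tfrac12\mu+1=0$, $|\mu|=1$), and on $\mathbb{T}$ one has $4|x_1|^{2}+|x_2|^{2}=\tfrac{15}{4}+\tfrac14=4$, so $x$ is rational $\cP$-inner by Lemma \ref{suff}. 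Here $f_2=1-2\lambda$, $g_2=2(2-\lambda)$, $b=2$, and indeed $f_2^{\vee}(1/2)=0$, so your evaluation at $\lambda=b$ reads $0=0$ and yields nothing; moreover $x_1$ is a constant, so $g_1$ is a nonzero constant while $g_2$ has degree one, and $g_1=tg_2$ fails. The zero of $x_2$ at $1/2$ sitting at the reflection of its pole at $2$ is exactly the ``parasitic'' cancellation, and it is produced, not prevented, by the pairing $N^{\sim n}=N$ (here $N$ and $D$ share the factor $2-\lambda$).

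You should also be aware that this is not just a defect of your write-up: the paper's proof of that direction relies on the assertion that a pole $a$ of $x_2$ forces $x_2^{\vee}(1/a)\neq 0$ (extracted from $x_2=x_2^{\vee}(1/\cdot)\,x_3$), and the same example contradicts it, since $x_2=-\tfrac12B$ has a pole at $2$ while $x_2^{\vee}(1/2)=0$. What survives, and is all that the proof of Theorem \ref{Penta-inner} actually uses, is precisely the direction you did establish: every zero of $g_1$ is a zero of $g_2$ with at least the same multiplicity, hence $g_1$ divides $D$, so that $x_1$ can be written as $BN_1/D$. (Both your argument and the paper's also tacitly assume that the zeros of $g_1$ are genuine poles of $x_1$, i.e.\ that there is no interaction between $B$ and $g_1$; state that convention if you rewrite.) So the correct course is to prove and use the one-sided divisibility, which your evaluation at $\lambda=a$ together with the pole-order count does cleanly, rather than trying to force the two-sided conclusion $g_1=tg_2$.
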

	\begin{proof}
		Let $x = (x_{1},x_{2},x_{3})$ be a rational $\cP-$inner function. Let $g_{1}(a) = 0$. Suppose if possible $x_{1}^{\vee}(1/a) = 0.$ This implies that
		$f^{\vee}_{1}(1/a) = 0$, which in turn implies that $f_{1}(1/\overline{a}) = 0,$ this together with $g_{1}(a) = 0,$ imply that $f_{1}/g_{1}$ has a Blaschke factor, which is a contradiction. Hence, $x_{1}^{\vee}(1/a) \neq 0.$ This proves $(1).$
		
		Since $x = (x_{1},x_{2},x_{3})$ is a rational $\cP-$inner function, $(x_{2}$ and $x_{3})$ is a $\Gamma-$inner function. Therefore, $x_{2}, x_{3}$ satisfy
		$$ x_{2}(\lambda) = \overline{x_{2}(\lambda)} x_{3}(\lambda) = x_{2}^{\vee}(\overline{\lambda}) x_{3}(\lambda) = x_{2}^{\vee}(1/\lambda)x_{3}(\lambda)$$ for all $\lambda \in \mathbb{T}.$ Since the first and last terms are rational functions,
		$$ x_{2}(\lambda) = x_{2}^{\vee}(1/\lambda) x_{3}(\lambda) \hspace{5mm} \text{for all } \lambda\in\mathbb{C}. $$ Hence,
		$$ x_{2}(a)\neq 0 \Rightarrow x_{2}^{\vee}(1/a) \neq 0.$$
		Since $x = (x_{1},x_{2},x_{3})$ is a rational $\cP-$inner function, $x_{1}$ and $x_{2}$ satisfy
		\begin{align}
			& x_{1}(\lambda) \overline{x_{1}(\lambda)} = 1 - \frac{1}{4} x_{2}(\lambda)\overline{x_{2}(\lambda)} \nonumber\\
			\Rightarrow & x_{1}(\lambda) x_{1}^{\vee}(\overline{\lambda}) = 1 - \frac{1}{4} x_{2}(\lambda) x_{2}^{\vee}(\overline{\lambda})\nonumber
		\end{align}
		 for all $\lambda \in \mathbb{T}.$ This implies
		\begin{align}\label{Eq11}
			x_{1}(\lambda) x_{1}^{\vee}(1/\lambda) = 1 - \frac{1}{4}x_{2}(\lambda) x_{2}^{\vee}(1/\lambda)\quad \text{ for all } \lambda \in \mathbb{T}.
		\end{align}
		Since both the left hand side and the right hand side are rational functions in equation \eqref{Eq11}, it follows that
		$$ x_{1}(\lambda) x_{1}^{\vee}(1/\lambda) = 1 - \frac{1}{4}x_{2}(\lambda) x_{2}^{\vee}(1/\lambda) \hspace{5mm} \text{for all } \lambda\in\mathbb{C}.$$
		For $m \geq 1$, we have
		\begin{align}\label{Eq22}
			(\lambda - a)^{m-1} x_{1}(\lambda) x_{1}^{\vee}(1/\lambda) = (\lambda - a)^{m-1}(1 - \frac{1}{4}x_{2}(\lambda) x_{2}^{\vee}(1/\lambda))
		\end{align}
		for all $\lambda\in\mathbb{C}$.
		
		Let $a$ be a pole of $x_{1}$ of multiplicity $m\geq 1.$  Clearly, $|a|>1$. Hence $|1/a|<1,$ and so $x_{1}^{\vee}, x_{2}^{\vee}$ are analytic at $1/a.$ Also by part-1 of the proposition $x_{1}^{\vee} (1/a) \neq 0.$ Therefore, on letting $\lambda \rightarrow a$ in \eqref{Eq22}, we get $$(\lambda - a)^{m-1} x_{2}(\lambda) \rightarrow \infty.$$ Thus $a$ is a pole of $x_{2}$ of multiplicity at least $m.$
		
		Let $a$ be a pole of $x_{2}$ of multiplicity $m\geq 1.$ Again on letting $\lambda \rightarrow a$ in equation \eqref{Eq22} we get that $a$ is a pole of $x_{1}$ of multiplicity at least $m.$ This proves that $g_{1}$ and $g_{2}$ have same zeros with same multiplicities. Hence $g_{1} = t g_{2}$ for some non-zero constant $t$. 
	\end{proof}

	Now we are ready to prove the main result of this section.
	
	\begin{thm}\label{Penta-inner}
		If $x=(x_1, x_2, x_3)$ is a rational $\cP-$inner function and the degree of $x_3$ is $n$, then there exist polynomials $N_1, N_2, D$ and a finite Blaschke product $B$ such that
		\begin{enumerate}
			\item $(x_{2},x_{3}) = \left(\frac{N_{2}}{D}, \frac{D^{\sim n}}{D}\right)$ is a $\Gamma-$inner function,
			\item $x_1 =B\frac {N_1}{D}$ on $\overline{\mathbb{D}}$,
			\item $|N_{1}(\lambda)|^2= |D(\lambda)|^2- \frac{1}{4}|N_2(\lambda)|^2$ on $\mathbb{T}$, and
			\item $\operatorname{deg}(N_1) \leq n$.
		\end{enumerate}
		Conversely, if $N_1, N_2,$ and $D$ are polynomials satisfying $(1)$ and $(3)$ above, then
		$(\frac{N_{1}}{D}, \frac{N_{2}}{D}, \frac{D^{\sim n}}{D})$ is a rational $\cP-$inner function and the degree of $\frac{D^{\sim n}}{D}$ is equal to $n$.
		
		Furthermore, a triple of polynomials $N_1', N_2'$ and $D'$ satisfy $(1)–(4)$ if and only if there
		exists a non-zero real number $t$ such that
		$$N_1=tN_1',\quad N_2=tN_2' \quad\text{ and } \quad D=tD'.$$
	\end{thm}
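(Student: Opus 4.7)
My plan is to derive the forward direction from Theorem~\ref{Gamma-Inner} applied to $(x_{2},x_{3})$ and Proposition~\ref{D} applied to $x_{1}$, close the degree bound~(4) by a Laurent polynomial comparison on $\mathbb{T}$, and then dispatch the converse and uniqueness via Lemma~\ref{suff} and the uniqueness clause of Theorem~\ref{Gamma-Inner}.

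\textbf{Forward direction.} Since $(x_{2},x_{3})$ is rational $\Gamma$-inner with $\deg(x_{3})=n$, Theorem~\ref{Gamma-Inner} supplies polynomials $N_{2},D$ of degree at most $n$ with $D$ non-vanishing on $\overline{\mathbb{D}}$, $x_{2}=N_{2}/D$, and $x_{3}=D^{\sim n}/D$; this is~(1). To obtain~(2), I factor $x_{1}=B\,f_{1}/g_{1}$, where $B$ is the finite Blaschke product whose zeros (with multiplicity) coincide with those of $x_{1}$ in $\mathbb{D}$ and $f_{1}/g_{1}$ is the reduced form of $x_{1}/B$. Then $g_{1}$ has no zero on $\overline{\mathbb{D}}$ while $f_{1}$ has no zero in $\mathbb{D}$, and in particular $f_{1}/g_{1}$ admits no Blaschke factor. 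Writing $x_{2}=f_{2}/g_{2}$ in reduced form, Proposition~\ref{D}(2) forces $g_{1}=t\,g_{2}$ for some nonzero constant $t$, and $N_{2}/D=f_{2}/g_{2}$ together with $\gcd(f_{2},g_{2})=1$ gives $g_{2}\mid D$. Hence $g_{1}\mid D$: setting $D=s\,g_{1}$ and $N_{1}=s\,f_{1}$ yields $x_{1}=B\,N_{1}/D$. Property~(3) is then immediate, because $b\cP=K_{0}$ gives $4|x_{1}|^{2}+|x_{2}|^{2}=4$ on $\mathbb{T}$ and $|B|=1$ on $\mathbb{T}$ lets us clear denominators.

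\textbf{The degree bound.} This is the main obstacle. I intend to establish~(4) by viewing~(3) as an identity of Laurent polynomials in $\lambda\in\mathbb{T}$. Writing $N_{1}(\lambda)=\sum_{i=0}^{m}a_{i}\lambda^{i}$ with $a_{m}\neq 0$ and $m=\deg N_{1}$, the Laurent expansion of $|N_{1}(\lambda)|^{2}=\sum_{i,j}a_{i}\overline{a_{j}}\lambda^{i-j}$ has coefficient $a_{m}\overline{a_{0}}$ at $\lambda^{m}$, while all Laurent degrees appearing in $|D(\lambda)|^{2}-\tfrac14|N_{2}(\lambda)|^{2}$ lie in $[-n,n]$. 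If $m>n$, matching the coefficient of $\lambda^{m}$ forces $a_{m}\overline{a_{0}}=0$, and since $a_{m}\neq 0$, we obtain $N_{1}(0)=a_{0}=0$. However, $N_{1}(0)=s(0)f_{1}(0)$, and both factors are nonzero: $s$ divides $D$, so $s$ inherits the property of being non-vanishing on $\overline{\mathbb{D}}$, while $f_{1}$ has no zero in $\mathbb{D}$ by construction. This contradiction forces $\deg N_{1}\le n$.

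\textbf{Converse and uniqueness.} Given polynomials $N_{1},N_{2},D$ satisfying (1) and (3), Theorem~\ref{Gamma-Inner} guarantees that $(N_{2}/D,D^{\sim n}/D)$ is rational $\Gamma$-inner of degree $n$ with $D$ non-vanishing on $\overline{\mathbb{D}}$, so $x_{1}:=N_{1}/D$ has all poles outside $\overline{\mathbb{D}}$. Property~(3) rewrites as $4|x_{1}|^{2}+|x_{2}|^{2}=4$ on $\mathbb{T}$, and Lemma~\ref{suff} then yields that $(N_{1}/D,N_{2}/D,D^{\sim n}/D)$ is rational $\cP$-inner. For uniqueness, the uniqueness clause of Theorem~\ref{Gamma-Inner} pins $(N_{2},D)$ down up to a common nonzero real scalar $t$; the identity $|N_{1}|^{2}=|D|^{2}-\tfrac14|N_{2}|^{2}$ on $\mathbb{T}$ written for both triples gives $|N_{1}'/N_{1}|=|t|$ on $\mathbb{T}$, and since in the canonical construction $N_{1}=s\,f_{1}$ has no zero in $\mathbb{D}$, the ratio $N_{1}'/N_{1}$ is an analytic function on $\mathbb{D}$ of constant modulus on $\mathbb{T}$, hence a constant, which together with the normalization of the accompanying Blaschke product forces $N_{1}=t\,N_{1}'$.
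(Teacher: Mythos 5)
Your forward direction, degree bound, and converse follow essentially the paper's own route: Theorem \ref{Gamma-Inner} for condition (1), the factorization $x_1=Bf_1/g_1$ with no Blaschke factor plus Proposition \ref{D} for condition (2), clearing denominators on $\mathbb{T}$ for (3), a Laurent-coefficient comparison hinging on $N_1(0)\neq 0$ for (4), and Lemma \ref{suff} for the converse. Indeed you fill in a point the paper only asserts, namely why $N_1(0)\neq 0$: you write $N_1=s\,f_1$ with $s=D/g_1$ a polynomial factor of $D$ (hence zero-free on $\overline{\mathbb{D}}$) and $f_1$ zero-free in $\mathbb{D}$; just note that this $s$ is a polynomial, not a constant, which is exactly how you use it. These parts are correct and at least as detailed as the paper's.

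The genuine gap is in your argument for the ``furthermore'' clause. The step ``$N_1'/N_1$ is an analytic function on $\mathbb{D}$ of constant modulus on $\mathbb{T}$, hence a constant'' is false as a principle: $\lambda\mapsto\lambda$ is analytic on $\mathbb{D}$ with constant modulus on $\mathbb{T}$. Constant boundary modulus only gives a constant times a finite Blaschke product, and to exclude the Blaschke factor you would also need $N_1'$ to be zero-free in $\mathbb{D}$, which you have not shown for an arbitrary triple satisfying (1)--(4); condition (2) only provides $x_1=B'N_1'/D'$ for \emph{some} Blaschke product $B'$, which may leave zeros of $N_1'$ inside $\mathbb{D}$. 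Concretely, for $x=(\lambda,0,\lambda^2)$ both $(N_1,N_2,D)=(1,0,1)$ with $B=\lambda$ and $(N_1',N_2',D')=(\lambda,0,1)$ with $B'=1$ satisfy (1)--(4), so no modulus argument alone can yield $N_1=tN_1'$: the clause must be read with the same Blaschke product $B$ for both triples. Even granting zero-freeness of both numerators, your argument only delivers $N_1=\mu N_1'$ with $|\mu|=|t|$, and the reduction of $\mu$ to the real number $t$ is precisely what you defer to an unexplained ``normalization of the accompanying Blaschke product.'' The paper avoids all of this by using condition (2) directly with the common $B$: from $B\,N_1/D=x_1=B\,N_1'/D'$ and $D=tD'$ (supplied by Theorem \ref{Gamma-Inner}) one gets $N_1=tN_1'$ immediately. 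Replacing your modulus argument by this cancellation closes the gap.
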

	\begin{proof}
		Let $x = (x_{1},x_{2},x_{3})$ be a rational $\cP-$inner function and the degree of $x_{3}$ be $n.$ Then $(x_{2},x_{3})$ is a rational $\Gamma-$inner function. By Theorem \ref{Gamma-Inner}, there exist two polynomials $N_{2}$ and $D$ of degree less than or equal to $n$ such that
		$$(x_{2},x_{3}) =  \left(\frac{N_{2}}{D}, \frac{D^{\sim n}}{D}\right).$$
		This proves condition $(1).$ Note that $D(\lambda) \neq 0$ for all $\lambda \in \overline{\bD}.$ Since $x_{1}$ is a rational function with poles outside $\overline{\bD},$ we have
		$$ x_{1} = B \frac{f}{g}$$ where $B$ is a finite Blaschke product and $f,g$ are relatively prime polynomials such that $f/g$ does not contain any Blaschke factor. By Proposition \ref{D}, $g$ can be taken to be $D.$ Let us denote $f$ by $N_{1}.$ Thus, $$x_{1} = B \frac{N_{1}}{D}.$$ This proves condition $(2).$
		
		Since $\big(x_{1}(\lambda),x_{2}(\lambda),x_{3}(\lambda)\big)\in b \cP$ for all $\lambda\in\mathbb{T},$ we have
		$$|x_{1}(\lambda)|^{2} = 1 - \frac{1}{4}|x_{2}(\lambda)|^{2}.$$
		By virtue of conditions $(1)$ and $(2),$ we have
		\begin{align}
			& \bigg|\frac{N_{1}(\lambda)}{D(\lambda)}\bigg|^{2} = 1 - \frac{1}{4} \bigg|\frac{N_{2}(\lambda)}{D(\lambda)}\bigg|^{2} \nonumber \\
			\Rightarrow &  |N_{1}(\lambda)|^{2} = |D(\lambda)|^{2} - \frac{1}{4}|N_{2}(\lambda)|^{2} \label{Eq33}
		\end{align}	
		for all $\lambda\in\mathbb{T}.$ This proves condition $(3).$
		
		From equation \eqref{Eq33}, it follows that
		$$N_{1}(\lambda) N_{1}^{\vee}(\overline{\lambda}) = D(\lambda) D^{\vee}(\overline{\lambda}) -\frac{1}{4} N_{2}(\lambda) N_{2}^{\vee}(\overline{\lambda}).$$
		This is same as
		\begin{align}
			N_{1}(\lambda) N_{1}^{\vee}(1/\lambda) = D(\lambda) D^{\vee}(1/\lambda) - \frac{1}{4}N_{2}(\lambda) N_{2}^{\vee}(1/\lambda) \label{Eq44}
		\end{align}
		for all $\lambda\in\mathbb{T}.$ Since $N_{1}(0) \neq 0,$ the coefficient of $\lambda^{\operatorname{deg}(N_{1})}$ is non-zero in $N_{1}(\lambda) N_{1}^{\vee}(1/\lambda),$ which is the highest degree coefficient in this expression. Since the degree of the right hand side in equation \eqref{Eq44} is at most $n,$ we get $\operatorname{deg}(N_{1}) \leq n.$ This proves condition $(4).$
		
		Proof of the converse follows from Theorem \ref{Gamma-Inner} and Lemma \ref{suff}.
		
		Finally, suppose a triple of polynomials $N_{1}', N_{2}'$ and $D'$ satisfy $(1) - (4).$ By Theorem \ref{Gamma-Inner}, there exists a non-zero real number $t$ such that $N_{2} = t N_{2}' $ and $D = t D'.$ Using $(2)$ we get $N_{1} = t N_{1}'. $ The converse is straightforward.
	\end{proof}

	\vspace{0.1in} \noindent\textbf{Acknowledgement:}
	
	The research works of the first author supported by the Prime Minister Research Fellowship PM/MHRD-20-15227.03. The authors thank Prof. Tirthankar Bhattacharyya for his valuable discussions and suggestions. We are greatful to the anonymous referee for useful comments and suggestions, in particular on Theorem \ref{dist} and Propostion \ref{D}.

\end{document}